\author[A.~J.~Di~Scala]{Antonio J.~Di~Scala}
\address{\parbox{\linewidth}{
Politecnico di Torino, Department of Mathematical Sciences\\
Corso Duca degli Abruzzi 24, 10129 Torino, Italy\\[-8pt]}}
\email{antonio.discala@polito.it}
\author[C.~Sanna]{Carlo Sanna}
\address{\parbox{\linewidth}{
Politecnico di Torino, Department of Mathematical Sciences\\
Corso Duca degli Abruzzi 24, 10129 Torino, Italy\\[-8pt]}}
\email{carlo.sanna.dev@gmail.com}
\author[E.~Signorini]{Edoardo Signorini}
\address{\parbox{\linewidth}{
Universit\`a di Trento, Department of Mathematics\\
Via Sommarive 14, 38123 Trento, Italy\\[1pt]
Telsy Elettronica e Telecomunicazioni S.p.A.\\
Corso Svizzera 185, 10149 Torino, Italy\\[-8pt]}}
\email{edoardo.signorini@telsy.com}
\keywords{cyclotomic polynomial; Vandermonde matrix; condition number; RLWE; PLWE}
\subjclass[2010]{Primary: 11C99, Secondary: 15A12, 15B05.}
\title{On the condition number of the Vandermonde matrix of the $n$th cyclotomic polynomial}
\newtheorem{thm}{Theorem}[section]
\newtheorem{lem}[thm]{Lemma}
\theoremstyle{remark}
\newcommand{\Cond}{\operatorname{Cond}}
\newcommand{\rad}{\operatorname{rad}}
\newcommand{\Tr}{\operatorname{Tr}}
\newcommand{\Id}{\operatorname{Id}}
\begin{document}

\maketitle

\begin{abstract}
Recently, Blanco-Chac{\'o}n proved the equivalence between the Ring Learning With Errors and Polynomial Learning With Errors problems for some families of cyclotomic number fields by giving some upper bounds for the condition number $\Cond(V_n)$ of the Vandermonde matrix $V_n$ associated to the $n$th cyclotomic polynomial.
We prove some results on the singular values of $V_n$ and, in particular, we determine $\Cond(V_n)$ for $n = 2^k p^\ell$, where $k, \ell \geq 0$ are integers and $p$ is an odd prime number.
\end{abstract}

\section{Introduction}

Let $n$ be a positive integer and let $\zeta_1, \dots, \zeta_m$ be the primitive $n$th roots of unity, where $m := \varphi(n)$ is the Euler's totient function of $n$.
Moreover, let $V_n$ be the Vandermonde matrix associated with the $n$th cyclotomic polynomial, that is,
\begin{equation*}
V_n := \begin{pmatrix}
1 & \zeta_1 & \zeta_1^2 & \cdots & \zeta_1^{m-1} \\
1 & \zeta_2 & \zeta_2^2 & \cdots & \zeta_2^{m-1} \\
1 & \zeta_3 & \zeta_3^2 & \cdots & \zeta_3^{m-1} \\
\vdots & \vdots & \vdots & \ddots & \vdots \\
1 & \zeta_m & \zeta_m^2 & \cdots & \zeta_m^{m-1} \\
\end{pmatrix} .
\end{equation*}
Recall that the \emph{condition number} of an invertible complex matrix $A = (a_{i,j})_{1 \leq i,j \leq k}$ is defined~by
\begin{equation*}
\Cond (A) := \|A\|\|A^{-1}\| ,
\end{equation*}
where
\begin{equation*}
\|A\| := \sqrt{\sum_{i \,=\, 1}^k \sum_{j \,=\, 1}^k |a_{i,j}|^2} = \sqrt{\Tr(A^* A)} 
\end{equation*}
is the \emph{Frobenius norm} of $A$ and $A^*$ is the conjugate transpose of $A$.

Recently, Blanco-Chac{\'o}n~\cite{Blanco} gave some upper bounds for the condition number of $V_n$.
This in order to prove the equivalence between the \emph{Ring Learning With Errors} and \emph{Polynomial Learning With Errors} problems for some infinite families of cyclotomic number fields~(see also~\cite{MR2980590, MR2660480, MR3794783}).

Our first result is the following.

\begin{thm}\label{thm:tosquarefree}
For every positive integer $n$, we have
\begin{equation*}
\Cond(V_n) = \frac{n}{\rad(n)} \Cond(V_{\rad(n)}) ,
\end{equation*}
where $\rad(n)$ denotes the product of all prime factors of $n$.
\end{thm}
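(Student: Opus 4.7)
The plan is to analyze $\|V_n\|$ and $\|V_n^{-1}\|$ separately. The Frobenius norm $\|V_n\|$ is immediate: every entry of $V_n$ lies on the unit circle, so $\|V_n\| = \varphi(n)$. Since $\varphi(n)/n = \varphi(\rad(n))/\rad(n)$ (both equal $\prod_{p\mid n}(1-1/p)$), this gives $\|V_n\| = (n/\rad(n))\,\|V_{\rad(n)}\|$, and the theorem reduces to the identity $\|V_n^{-1}\| = \|V_{\rad(n)}^{-1}\|$.

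To prove this, set $r := \rad(n)$ and $d := n/r$. The classical factorization $\Phi_n(x) = \Phi_r(x^d)$ suggests indexing the primitive $n$th roots of unity as $\eta_i\,\omega^j$, where $\xi_1,\dots,\xi_{\varphi(r)}$ enumerate the primitive $r$th roots, $\eta_i$ is a fixed $d$th root of $\xi_i$, and $\omega$ is a primitive $d$th root of unity. Writing each column index $c$ of $V_n$ as $c = kd + j'$ with $0 \leq k < \varphi(r)$ and $0 \leq j' < d$, the corresponding entry becomes
\begin{equation*}
(\eta_i\,\omega^j)^{kd+j'} \;=\; \xi_i^{k}\,\eta_i^{j'}\,\omega^{jj'}.
\end{equation*}
This yields the matrix factorization $V_n = N\,(V_r \otimes I_d)$, where $N$ is block-diagonal with $\varphi(r)$ blocks $F D_i$; here $F$ is the $d\times d$ DFT matrix with $F_{j,j'} = \omega^{jj'}$ and $D_i := \operatorname{diag}(1,\eta_i,\dots,\eta_i^{d-1})$.

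The crucial observation is then $N^*N = d\,I_{\varphi(n)}$: the DFT orthogonality $F^*F = d\,I_d$ combined with $D_i^*D_i = I_d$ (each $\eta_i$ has modulus one) gives $(FD_i)^*(FD_i) = d\,I_d$ blockwise. Consequently
\begin{equation*}
V_n^*V_n \;=\; (V_r^*\otimes I_d)\,(N^*N)\,(V_r\otimes I_d) \;=\; d\,(V_r^*V_r)\otimes I_d,
\end{equation*}
and therefore
\begin{equation*}
\|V_n^{-1}\|^2 \;=\; \Tr\bigl((V_n^*V_n)^{-1}\bigr) \;=\; \Tr\!\Bigl(\tfrac{1}{d}\,(V_r^*V_r)^{-1}\otimes I_d\Bigr) \;=\; \Tr\bigl((V_r^*V_r)^{-1}\bigr) \;=\; \|V_r^{-1}\|^2,
\end{equation*}
which, combined with the formula for $\|V_n\|$, proves the claim.

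The only place care is required is the verification of the factorization $V_n = N\,(V_r\otimes I_d)$: one must keep the row- and column-index conventions (corresponding to writing indices in base $d$) consistent so that the Kronecker structure emerges cleanly. Once this bookkeeping is in place, the DFT unitarity and the standard identities $\Tr(A\otimes B) = \Tr(A)\Tr(B)$ and $(A\otimes B)^{-1} = A^{-1}\otimes B^{-1}$ deliver the result in a few lines.
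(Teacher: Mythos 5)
Your proof is correct, but it reaches the key structural fact by a genuinely different route than the paper. The paper computes the entries of the Gram matrix $G_n = V_n^* V_n$ directly as Ramanujan sums $c_n(i-j)$ and uses the von Sterneck--H\"older formula to show that $c_n(t)$ vanishes unless $h := n/\rad(n)$ divides $t$, in which case $c_n(t) = h\,c_{\rad(n)}(t/h)$; this yields $G_n = h\, G_{\rad(n)} \otimes \Id_h$, from which the eigenvalues (and hence $\Cond(V_n)$ via $\Cond(V_n) = m\sqrt{\Tr(G_n^{-1})}$) are read off. You instead factor $V_n$ itself: using $\Phi_n(x) = \Phi_{\rad(n)}(x^{d})$ to parametrize the primitive $n$th roots as $d$th roots of the primitive $\rad(n)$th roots, you obtain $V_n = N\,(V_{\rad(n)} \otimes \Id_d)$ up to row permutation, with $N^*N = d\,\Id$ by DFT orthogonality, which gives the same identity $V_n^*V_n = d\,(V_{\rad(n)}^*V_{\rad(n)}) \otimes \Id_d$ without ever mentioning Ramanujan sums. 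Your version is arguably more explanatory, since it exhibits the singular-value relation already at the level of $V_n$ (as a product with a matrix proportional to a unitary) rather than only at the level of $G_n$; the price is the bookkeeping you acknowledge, namely that the row re-indexing is only a permutation (harmless for $V_n^*V_n$, since the enumeration of the $\zeta_i$ is arbitrary anyway) and that the column split $c = kd + j'$ matches the Kronecker convention with $k$ as the outer index. The paper's Ramanujan-sum computation, on the other hand, is not wasted effort in context: the same machinery is reused later for Lemma~\ref{lem:G2n} and for the explicit eigenvalues of $G_p$ in the proof of Theorem~\ref{thm:2primepower}.
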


Our second result is a formula for the condition number of $V_n$ when $n$ is a prime power or a power of $2$ times an odd prime power.

\begin{thm}\label{thm:2primepower}
If $n = p^k$, where $k$ is a positive integer and $p$ is a prime number, or if $n = 2^k p^\ell$, where $k, \ell$ are positive integers and $p$ is an odd prime number, then
\begin{equation*}
\Cond(V_n) = \varphi(n)\sqrt{2\left(1 - \frac1{p}\right)} .
\end{equation*}
\end{thm}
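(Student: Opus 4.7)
The plan is to invoke Theorem~\ref{thm:tosquarefree} to reduce both cases to the squarefree ``base'' values $\Cond(V_p)$ and $\Cond(V_{2p})$, and then to compute these by diagonalising the Gram matrix $V_p^* V_p$ directly. Since $\rad(p^k) = p$ and $\rad(2^k p^\ell) = 2p$, Theorem~\ref{thm:tosquarefree} yields $\Cond(V_{p^k}) = p^{k-1}\Cond(V_p)$ and $\Cond(V_{2^k p^\ell}) = 2^{k-1}p^{\ell-1}\Cond(V_{2p})$; combined with $\varphi(p^k) = p^{k-1}(p-1)$ and $\varphi(2^k p^\ell) = 2^{k-1}p^{\ell-1}(p-1)$, the theorem reduces to proving
\[
\Cond(V_p) = \Cond(V_{2p}) = (p-1)\sqrt{2\bigl(1-\tfrac{1}{p}\bigr)} .
\]

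For $\Cond(V_p)$ I would evaluate the Gram matrix $V_p^* V_p$ entrywise. With $\zeta = e^{2\pi i/p}$, its $(j+1, j'+1)$ entry equals $\sum_{k=1}^{p-1}\zeta^{k(j'-j)}$ for $j, j' \in \{0, \dots, p-2\}$; since $|j' - j| < p$, standard orthogonality of the $p$th roots of unity yields $p-1$ on the diagonal and $-1$ off the diagonal. Thus $V_p^* V_p = p\Id - J$, where $J$ is the all-ones $(p-1)\times(p-1)$ matrix. The spectrum of $J$ being $\{p-1, 0, \dots, 0\}$, that of $V_p^* V_p$ is $\{1, p, \dots, p\}$ with $p$ occurring with multiplicity $p-2$. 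Taking the trace of $V_p^* V_p$ and of its inverse I obtain $\|V_p\|^2 = (p-1)^2$ and $\|V_p^{-1}\|^2 = 2(p-1)/p$, whose product gives the claimed value for $\Cond(V_p)^2$.

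To deduce the same value for $V_{2p}$, I would exhibit $V_{2p}$ as a product $V_p D$ for a suitable ordering of primitive roots. Since $p$ is odd, the map $\alpha \mapsto -\alpha$ is a bijection from the primitive $p$th roots of unity onto the primitive $2p$th roots of unity; ordering the rows of $V_{2p}$ accordingly, its $(j, k+1)$ entry becomes $(-\zeta^j)^k = (-1)^k\zeta^{jk}$, which identifies $V_{2p}$ with $V_p \operatorname{diag}(1,-1,1,\dots,(-1)^{p-2})$. Since right-multiplication by a diagonal unitary matrix preserves the Frobenius norm of a matrix and of its inverse, $\Cond(V_{2p}) = \Cond(V_p)$. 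The only step requiring any substantive computation is the identification $V_p^* V_p = p\Id - J$; once that is in hand, the spectral description and the remaining manipulations are immediate, so I do not foresee any significant obstacle.
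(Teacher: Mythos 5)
Your proof is correct and follows essentially the same route as the paper's: reduce to $\Cond(V_p)$ and $\Cond(V_{2p})$ via Theorem~\ref{thm:tosquarefree}, identify $G_p = p\Id_{p-1} - J$ (with $J$ the all-ones matrix) to read off the eigenvalues $1$ and $p$ with multiplicities $1$ and $p-2$, and pass from $p$ to $2p$ by a diagonal sign matrix. The only presentational differences are that the paper computes the entries of $G_p$ via the von Sterneck formula for Ramanujan sums and a determinant lemma rather than direct orthogonality, and obtains $\Cond(V_{2p}) = \Cond(V_p)$ from the similarity $G_{2p} = J^{-1} G_p J$ (Lemma~\ref{lem:G2n}, via multiplicativity of Ramanujan sums) rather than from your factorization $V_{2p} = V_p D$ and the unitary invariance of the Frobenius norm.
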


In particular, Theorem~\ref{thm:2primepower} improves the upper bound $\Cond(V_n) \leq 2(p-1)\varphi(n)$ given by Blanco-Chac{\'o}n in the case in which $n = p^k$ is a prime power~\cite[Theorem~3.9]{Blanco}.

Our proofs of Theorems~\ref{thm:tosquarefree} and \ref{thm:2primepower} are based on the study of the Gram matrix $G_n := V_n^*\, V_n$.
Regarding that, we give also the following result.

\begin{thm}\label{thm:nGn1}
For every positive integer $n$, the matrix $n\,G_n^{-1}$ has integer entries.
\end{thm}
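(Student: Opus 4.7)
The plan is to interpret $G_n$ as the Gram matrix of the ring of integers $\mathcal{O}_K = \mathbb{Z}[\zeta_n]$ of $K = \mathbb{Q}(\zeta_n)$ under a natural Hermitian trace pairing, and then reduce the claim to the classical fact that $n$ annihilates the quotient $\mathcal{O}_K^\vee/\mathcal{O}_K$, where $\mathcal{O}_K^\vee$ is the codifferent.

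First I would compute
\[
(G_n)_{ij} \;=\; \sum_{\zeta\text{ primitive}} \zeta^{\,j-i} \;=\; \Tr_{K/\mathbb{Q}}\!\bigl(\zeta_n^{j-1}\,\overline{\zeta_n^{i-1}}\bigr),
\]
recognizing $G_n$ as the Gram matrix of the power basis $1,\zeta_n,\dots,\zeta_n^{m-1}$ of $\mathcal{O}_K$ under the sesquilinear pairing $\langle\alpha,\beta\rangle := \Tr_{K/\mathbb{Q}}(\alpha\bar\beta)$ (complex conjugation on $K$ sends $\zeta_n$ to $\zeta_n^{-1}$). By the standard Gram/dual-basis dictionary, the columns of $G_n^{-1}$ then list the coordinates, in the power basis, of the $\mathbb{Z}$-basis of $\mathcal{O}_K^\vee$ dual to $1,\zeta_n,\dots,\zeta_n^{m-1}$. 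Consequently, $n\,G_n^{-1}$ has integer entries if and only if $n\,\mathcal{O}_K^\vee \subseteq \mathcal{O}_K$.

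Next, since $\mathcal{O}_K$ is closed under complex conjugation, the substitution $\beta\mapsto\bar\beta$ identifies this Hermitian codifferent with the ordinary bilinear one, $\{x\in K : \Tr_{K/\mathbb{Q}}(x\,\mathcal{O}_K)\subseteq\mathbb{Z}\}$. For the monogenic ring of integers $\mathbb{Z}[\zeta_n]$, the classical formula yields $\mathcal{O}_K^\vee = \tfrac{1}{\Phi_n'(\zeta_n)}\,\mathbb{Z}[\zeta_n]$, so it remains to show that $\Phi_n'(\zeta_n)$ divides $n$ in $\mathbb{Z}[\zeta_n]$. Differentiating the factorization $x^n - 1 = \Phi_n(x)\prod_{d\mid n,\,d<n}\Phi_d(x)$ and evaluating at $x=\zeta_n$ kills the $\Phi_n(\zeta_n)$-contribution and yields
\[
\frac{n}{\Phi_n'(\zeta_n)} \;=\; \zeta_n \prod_{\substack{d\mid n\\ d<n}} \Phi_d(\zeta_n) \;\in\; \mathbb{Z}[\zeta_n],
\]
completing the reduction.

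The single delicate step is the Gram/dual-basis dictionary, namely verifying that $G_n^{-1}$ genuinely encodes a $\mathbb{Z}$-basis of the codifferent rather than just a $\mathbb{Q}$-basis. This is routine once closure of $\mathcal{O}_K$ under complex conjugation has been used to equate the Hermitian and bilinear codifferents, but it is the one spot where care with conventions is required.
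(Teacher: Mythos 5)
Your proof is correct, but it takes a genuinely different route from the paper's. The paper argues elementarily: writing $V_n^{-1}=(w_{i,k})$ and $G_n^{-1}=V_n^{-1}\big(V_n^{-1}\big)^*$, it uses the orthogonality relation $\sum_{\ell=1}^n \big(\zeta_k\overline{\zeta_h}\big)^\ell = n\delta_{k,h}$ to write the $(i,j)$ entry of $n\,G_n^{-1}$ as $\sum_{\ell=1}^n S_{i,\ell}S_{j,\ell}$ with $S_{i,\ell}:=\sum_k w_{i,k}\zeta_k^\ell$, and then checks that every $S_{i,\ell}$ is an integer ($S_{i,\ell}=\delta_{i,\ell+1}$ for $\ell<m$, and for $\ell\geq m$ it is an integer combination of these because the $\zeta_k$ are conjugate algebraic integers). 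Your argument instead identifies $G_n$ as the Gram matrix of the power basis of $\mathbb{Z}[\zeta_n]$ under the pairing $\Tr_{K/\mathbb{Q}}(\alpha\bar\beta)$, so that $G_n^{-1}$ records a $\mathbb{Z}$-basis of the codifferent in power-basis coordinates, and reduces the claim to $n\,\mathcal{O}_K^\vee\subseteq\mathcal{O}_K$, i.e.\ to the divisibility $\Phi_n'(\zeta_n)\mid n$ in $\mathbb{Z}[\zeta_n]$, which you verify correctly by differentiating $x^n-1=\Phi_n(x)\prod_{d\mid n,\,d<n}\Phi_d(x)$ at $\zeta_n$. The two delicate points you flag are indeed the right ones and are handled correctly: the dual basis of a $\mathbb{Z}$-basis of a full lattice is a $\mathbb{Z}$-basis of the dual lattice, and conjugation-stability of $\mathbb{Z}[\zeta_n]$ identifies the Hermitian dual with the ordinary codifferent (since $G_n$ is real symmetric, the transpose/conjugate conventions cause no trouble). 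As for what each approach buys: the paper's proof is completely self-contained, needing only orthogonality of roots of unity and monicity of $\Phi_n$; yours imports Euler's formula for the dual of a monogenic order (strictly, you do not even need $\mathcal{O}_K=\mathbb{Z}[\zeta_n]$, only the lattice $\mathbb{Z}[\zeta_n]$ with its power basis), but in exchange it explains conceptually why $n$ is the right multiplier --- it is the classical statement that the different of $\mathbb{Q}(\zeta_n)$ divides $(n)$ --- and, via Euler's explicit dual basis $c_j(\zeta_n)/\Phi_n'(\zeta_n)$ with $\Phi_n(x)/(x-\zeta_n)=\sum_j c_j(\zeta_n)x^j$, it yields a closed-form description of the entries of $n\,G_n^{-1}$, which bears directly on the question the authors raise after the statement of the theorem.
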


From a number-theoretic point of view, it might be of some interest trying to describe the entries of $n\,G_n^{-1}$ explicitely, or at least understand the integer sequence $\Tr(n\,G_n^{-1})_{n \geq 1}$ (which is related to $\Cond(V_n)$ by \eqref{equ:condmusigma} below).

\subsection*{Acknowledgments}
A.~J.~Di~Scala and C.~Sanna are members of GNSAGA of INdAM and of CrypTO, the group of Cryptography and Number Theory of Politecnico di Torino.
E.~Signorini is supported by Telsy S.p.A.

\section{Proofs}

For every positive integer $n$, the \emph{Ramanujan's sums} modulo $n$ are defined by
\begin{equation*}
c_n(t) := \sum_{i \,=\, 1}^m \zeta_i^t ,
\end{equation*}
for all integers $t$.
It is easy to check that $c_n(\cdot)$ is an even periodic function with period $n$.
Moreover, the following formula holds~\cite[Theorem~272]{MR2445243}
\begin{equation}\label{equ:vonSterneck}
c_n(t) = \mu\!\left(\tfrac{n}{(n, t)}\right) \frac{\varphi(n)}{\varphi\!\left(\tfrac{n}{(n, t)}\right)} ,
\end{equation}
where $\mu$ is the M\"obius function and $(n, t)$ denotes the greatest common divisor of $n$ and $t$.

Let $G_n := V_n^* \, V_n$ be the \emph{Gram matrix} of $V_n$.
By the previous considerations, we have
\begin{equation}\label{equ:Gn}
G_n = {\small\begin{pmatrix}
c_n(0) & c_n(1) & c_n(2) & \cdots & c_n(m - 1) \\
c_n(1) & c_n(0) & c_n(1) & \cdots & c_n(m - 2) \\
c_n(2) & c_n(1) & c_n(0) & \cdots & c_n(m - 3) \\
\vdots & \vdots & \vdots & \ddots & \vdots \\
c_n(m - 1) & c_n(m - 2) & c_n(m - 3) & \cdots &	 c_n(0) \\
\end{pmatrix}}
= \big(c_n(i - j)\big)_{1 \leq i,j \leq m} .
\end{equation}
In particular, $G_n$ is a symmetric Toeplitz matrix with integer entries.

Let $\sigma_1, \dots, \sigma_s$ be the distinct eigenvalues of $G_n$, which are real and positive, since $G_n$ is the Gram matrix of an invertible matrix, and let $\mu_1, \dots, \mu_s$ be their respective multiplicities.
We~have
\begin{equation}\label{equ:condmusigma}
\Cond(V_n) = \|V_n\|\|V_n^{-1}\| = m\sqrt{\Tr(G_n^{-1})} = m\sqrt{\sum_{i \,=\, 1}^s \frac{\mu_i}{\sigma_i}} .
\end{equation}
Therefore, the study of $\Cond(V_n)$ is equivalent to the study of the eigenvalues of $G_n$.

The next lemma relates the characteristic polynomials of $G_n$ and $G_{\rad(n)}$.

\begin{lem}\label{lem:Gneigen}
For every positive integer $n$, we have
\begin{equation*}
\det(G_n - x\Id_m) = h^m \det\!\big(G_{n^\prime} - \tfrac{x}{h}\Id_{m^\prime}\!\big)^h ,
\end{equation*}
where $n^\prime := \rad(n)$, $m^\prime := \varphi(n^\prime)$, and $h := n / n^\prime$.
\end{lem}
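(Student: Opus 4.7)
The plan is to exploit a reduction identity for Ramanujan sums that expresses $c_n$ in terms of $c_{n^\prime}$, where $n^\prime = \rad(n)$ and $h = n/n^\prime$. The identity I would establish is
\[
c_n(t) = \begin{cases} h\, c_{n^\prime}(t/h) & \text{if } h \mid t, \\ 0 & \text{otherwise.} \end{cases}
\]
Once this is in hand, $G_n$ becomes block-diagonal after a suitable reindexing, and the characteristic-polynomial identity follows almost immediately.

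For the vanishing half, \eqref{equ:vonSterneck} shows that $c_n(t) \neq 0$ forces $n/(n,t)$ to be squarefree, which in turn requires $p^{a-1} \mid (n,t)$ for every prime power $p^a \| n$; equivalently, $h \mid t$. When $h \mid t$, I write $t = h t^\prime$; since $(hn^\prime, ht^\prime) = h(n^\prime, t^\prime)$, one gets $n/(n,t) = n^\prime/(n^\prime, t^\prime)$, so the Möbius factor and the $\varphi$-denominator in \eqref{equ:vonSterneck} for $c_n(t)$ coincide with those for $c_{n^\prime}(t^\prime)$. The only difference is the numerator, and writing $n = \prod p^{a_p}$, $n^\prime = \prod p$, $h = \prod p^{a_p - 1}$ gives $\varphi(n)/\varphi(n^\prime) = h$. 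This proves the identity.

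Given the identity, the $(i,j)$ entry of $G_n$ from \eqref{equ:Gn} is nonzero only when $h \mid (i-j)$, i.e., when $i$ and $j$ lie in the same residue class modulo $h$. Let $P$ be the permutation matrix that reorders $\{1, \dots, m\}$ by grouping together the $m^\prime$ indices of each residue class (recall $m = h m^\prime$ since $\varphi(n)/\varphi(n^\prime) = h$). Then $P^{\!\top} G_n P$ is block-diagonal with $h$ blocks of size $m^\prime$; the block indexed by a fixed residue class has $(k,\ell)$ entry $c_n(h(k-\ell)) = h\, c_{n^\prime}(k - \ell)$, so it equals $h\, G_{n^\prime}$.

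Since conjugation by $P$ preserves the characteristic polynomial, we get
\[
\det(G_n - x \Id_m) = \det(h\, G_{n^\prime} - x \Id_{m^\prime})^{h} = h^{h m^\prime} \det\!\bigl(G_{n^\prime} - \tfrac{x}{h} \Id_{m^\prime}\bigr)^{h},
\]
and the conclusion follows from $m = h m^\prime$. The main technical obstacle is the Ramanujan sum identity; once it is established, the block decomposition and the determinant computation are essentially a reindexing.
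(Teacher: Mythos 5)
Your proposal is correct and follows essentially the same route as the paper: the same reduction $c_n(t)=h\,c_{n'}(t/h)$ for $h\mid t$ (and $c_n(t)=0$ otherwise) derived from \eqref{equ:vonSterneck}, followed by the same block decomposition of $G_n$ — the paper writes it as the Kronecker product $h\,G_{n'}\otimes \Id_h$ after reindexing, which is permutation-similar to your block-diagonal form with $h$ copies of $h\,G_{n'}$ — and the identical determinant computation.
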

\begin{proof}
We know from~\eqref{equ:Gn} that $G_n = \big(c_n(i - j)\big)_{0 \leq i, j < m}$, where we shifted the indices $i,j$ to the interval ${[0, m)}$ since this does not change the differences $i - j$ and simplifies the next arguments.
Write the integers $i, j \in {[0, m)}$ in the form $i = h i^\prime + i^{\prime\prime}$ and $j = h j^\prime + j^{\prime\prime}$, where $i^\prime, j^\prime \in {[0, m^\prime)}$ and $i^{\prime\prime}, j^{\prime\prime} \in {[0, h)}$ are integers.
By~\eqref{equ:vonSterneck} we have that $c_n(i - j) \neq 0$ if and only if $h$ divides $i - j$ (otherwise, $n / (n, i - j)$ is not squarefree), which in turn happens if and only if $i^{\prime\prime} = j^{\prime\prime}$.
In such a case, we have $(n, i - j) = h (n^\prime, i^\prime - j^\prime)$ and, again by~\eqref{equ:vonSterneck}, it follows that
\begin{equation*}
c_n(i - j) = \mu\!\left(\tfrac{n}{(n, i - j)}\right) \frac{\varphi(n)}{\varphi\!\left(\tfrac{n}{(n, i - j)}\right)} = \mu\!\left(\tfrac{n^\prime}{(n^\prime, i^\prime - j^\prime)}\right) \frac{h\, \varphi(n^\prime)}{\varphi\!\left(\tfrac{n^\prime}{(n^\prime, i^\prime - j^\prime)}\right)} = h\, c_{n^\prime}(i^\prime - j^\prime) .
\end{equation*}
Therefore, we have found that $G_n$ consists of $m^\prime \times m^\prime$ diagonal blocks of sizes $h \times h$.
Precisely,
\begin{equation*}
G_n = h \big(c_{n^\prime}(i^\prime - j^\prime) \Id_h \big)_{0 \leq i^\prime, j^\prime < m^\prime} = h\, G_{n^\prime} \otimes \Id_h ,
\end{equation*}
where $\otimes$ denotes the Kronecker product.
Consequently, the characteristic polynomial of $G_n$ is
\begin{align*}
\det(G_n - x\Id_m) &= h^m \det\big(G_{n^\prime} \otimes \Id_h - \tfrac{x}{h}\Id_m\!\big) \\
&= h^m \det\!\big((G_{n^\prime} - \tfrac{x}{h}\Id_{m^\prime}) \otimes \Id_h\!\big) \\
&= h^m \det\!\big(G_{n^\prime} - \tfrac{x}{h}\Id_{m^\prime}\!\big)^h ,
\end{align*}
as claimed.
\end{proof}

Now we are ready to prove the first result.

\subsection{Proof of Theorem~\ref{thm:tosquarefree}}

Let $n^\prime := \rad(n)$, $m^\prime := \varphi(n^\prime)$, and $h := n / n^\prime$.
Furthermore, let $\sigma_1^\prime, \dots, \sigma_{s^\prime}^\prime$ be the distinct eigenvalues of $G_{n^\prime}$, with respective multiplicities $\mu_1^{\prime}, \dots, \mu_{s^\prime}^{\prime}$.
It~follows from Lemma~\ref{lem:Gneigen} that $s^\prime = s$ and that the eigenvalues of $G_n$ are $h \sigma_1^\prime, \dots, h \sigma_{s}^\prime$, with respective multiplicities $h \mu_1^\prime, \dots, h \mu_{s}^\prime$.
Hence, \eqref{equ:condmusigma} yields
\begin{equation*}
\Cond(V_n) = m\sqrt{\sum_{i \,=\, 1}^s \frac{\mu_i}{\sigma_i}} = m\sqrt{\sum_{i \,=\, 1}^s \frac{\mu_i^\prime}{\sigma_i^\prime}} = \frac{m}{m^\prime}\Cond(V_{n^\prime}) = \frac{n}{n^\prime}\Cond(V_{n^\prime}) ,
\end{equation*}
as claimed.~$\square$\\

We need a couple of preliminary lemmas to the proof of Theorem~\ref{thm:2primepower}.

\begin{lem}\label{lem:G2n}
For every odd positive integer $n$, the matrices $G_{2n}$ and $G_n$ have the same eigenvalues (with the same multiplicities).
\end{lem}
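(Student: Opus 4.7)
The plan is to exhibit $G_{2n}$ as an explicit similarity transform of $G_n$, which forces the two matrices to share their eigenvalues with multiplicities. First I would observe that the statement even makes sense dimensionally: since $n$ is odd, $\varphi(2n) = \varphi(2)\varphi(n) = \varphi(n) = m$, so $G_{2n}$ and $G_n$ are both $m \times m$.

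The technical core is the pointwise identity
\[
c_{2n}(t) = (-1)^t\, c_n(t) \qquad \text{for every integer } t,
\]
which I would derive directly from \eqref{equ:vonSterneck}. When $t$ is even and $n$ is odd one has $(2n, t) = 2(n, t)$, so $2n/(2n, t) = n/(n, t)$; combined with $\varphi(2n) = \varphi(n)$, the factor of $2$ cancels from both the $\mu$ and the $\varphi$ terms, giving $c_{2n}(t) = c_n(t)$. When $t$ is odd one has $(2n, t) = (n, t)$, so $2n/(2n,t) = 2 \cdot n/(n,t)$ with $n/(n,t)$ odd, and the identities $\mu(2u) = -\mu(u)$ and $\varphi(2u) = \varphi(u)$ for odd $u$ then flip the overall sign and yield $c_{2n}(t) = -c_n(t)$.

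With this identity and the Toeplitz description from \eqref{equ:Gn} in hand, I would introduce the diagonal sign matrix $D := \operatorname{diag}\bigl((-1)^i\bigr)_{1 \leq i \leq m}$, which satisfies $D = D^{-1}$. Then
\[
(G_{2n})_{i,j} = c_{2n}(i-j) = (-1)^{i-j}\, c_n(i-j) = (-1)^i\, (G_n)_{i,j}\, (-1)^j = (D\, G_n\, D)_{i,j},
\]
so $G_{2n} = D\, G_n\, D^{-1}$. Similar matrices have identical characteristic polynomials, hence identical spectra with multiplicities, which is exactly the claim.

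The only mildly delicate step is the sign bookkeeping that produces the identity $c_{2n}(t) = (-1)^t c_n(t)$; once that is in place, the rest is just the change-of-basis observation above. So I do not expect any genuine obstacle in carrying out this plan.
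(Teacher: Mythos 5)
Your proof is correct and follows essentially the same route as the paper: both establish $c_{2n}(t) = (-1)^t c_n(t)$ and then conjugate $G_n$ by the alternating diagonal sign matrix. The only difference is that you derive the key identity directly from \eqref{equ:vonSterneck}, whereas the paper cites the multiplicativity of Ramanujan's sums together with $c_2(t) = (-1)^t$; your explicit dimension check $\varphi(2n)=\varphi(n)$ is a small point the paper leaves implicit.
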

\begin{proof}
It is known~\cite[Theorem~67]{MR2445243} that Ramanujan's sums are multiplicative functions respect to their moduli, that is, $c_{ab}(t) = c_a(t)\, c_b(t)$ for all coprime positive integers $a,b$.
Moreover, it is easy to check that $c_2(t) = (-1)^t$.
Thus, \eqref{equ:Gn} gives
\begin{equation*}
G_{2n} = \big(c_{2n}(i-j)\big)_{1\leq i, j \leq m} = \big((-1)^{i-j} c_n(i-j)\big)_{1\leq i, j \leq m} = J^{-1} G_n J ,
\end{equation*}
where $J$ is the $m \times m$ matrix alternating $+1$ and $-1$ on its diagonal and having zeros in all the other entries.
Therefore, $G_n$ and $G_{2n}$ are similar and consequently they have the same eigenvalues.
\end{proof}

\begin{lem}\label{lem:abdiagonal}
Given two complex numbers $a$ and $b$, the determinant of the $k \times k$ matrix
\begin{equation*}
\small\begin{pmatrix}
a & b & b & \cdots & b \\
b & a & b & \cdots & b \\
b & b & a & \cdots & b \\
\vdots & \vdots & \vdots & \ddots & \vdots \\
b & b & b & \cdots & a
\end{pmatrix}
\end{equation*}
is equal to $(a - b)^{k - 1} (a + (k - 1)b)$.
\end{lem}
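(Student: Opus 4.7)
The plan is to observe that the matrix, call it $M$, admits the decomposition $M = (a-b)\Id_k + b J$, where $J$ denotes the $k \times k$ all-ones matrix. The spectrum of $J$ is classical: the vector $(1,1,\dots,1)^\top$ spans a one-dimensional eigenspace with eigenvalue $k$, while the hyperplane of vectors whose entries sum to zero is the eigenspace for the eigenvalue $0$, of multiplicity $k-1$. Consequently the eigenvalues of $M$ are $a+(k-1)b$, with multiplicity one, and $a-b$, with multiplicity $k-1$. Taking the product of these eigenvalues yields the claimed formula.

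An alternative, purely computational route avoiding any appeal to diagonalization is to apply elementary row operations. First, replace the first row of $M$ by the sum of all its rows; the resulting row has every entry equal to $a+(k-1)b$, which can be factored out of the determinant. Then subtract $b$ times the (new, constant) first row from each subsequent row to clear the off-diagonal entries below, leaving an upper triangular matrix whose diagonal is $1, a-b, a-b, \dots, a-b$. The determinant is then $(a+(k-1)b)(a-b)^{k-1}$, as asserted.

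There is no real obstacle here: the statement is a well-known identity and is invoked in the paper only as a bookkeeping tool for the eigenvalue analysis of $G_n$ that follows. The only mild care required is to ensure that the case $k=1$ is handled correctly, where the formula collapses to $\det M = a$ with the convention $(a-b)^0 = 1$, which matches the single-entry matrix $(a)$.
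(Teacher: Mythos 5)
Your proposal is correct. Your second argument is essentially the paper's own proof: the paper subtracts the last row from all the others and then adds all columns into the last one before expanding, whereas you sum all rows into the first and then clear below it, but both are the same elementary triangularization idea. (One small point on your version: after replacing the first row by the sum of all rows you ``factor out'' $a+(k-1)b$, which is not literally a row operation when $a+(k-1)b=0$; this is harmless because both sides of the identity are polynomials in $a$ and $b$, so it suffices to verify it on the dense set where that quantity is nonzero, but it is worth saying.) Your first argument, via the decomposition $M=(a-b)\Id_k+bJ$ and the spectrum of the all-ones matrix $J$, is genuinely different from the paper's route and arguably more informative: it produces not just the determinant but the full eigenvalue decomposition of $M$, which is exactly the information the paper extracts downstream by applying the determinant formula to $G_p-x\Id_{p-1}$ and reading off the roots of the characteristic polynomial. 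The paper's row-reduction proof is more self-contained (no appeal to the spectral theory of $J$) and works verbatim over any commutative ring, while your spectral proof packages the computation conceptually and would shortcut the subsequent eigenvalue analysis of $G_p$.
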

\begin{proof}
Subtracting the last row from all the other rows, and then adding to the last column all the other columns, the matrix becomes
\begin{equation*}
\tiny\begin{pmatrix}
a - b & 0 & \cdots & 0 & 0 \\
0 & a - b & \cdots & 0 & 0 \\
\vdots & \vdots & \ddots & \vdots & \vdots \\
0 & 0 & 0 & a - b & 0 \\
b & b & b & b & a + b(k - 1)
\end{pmatrix} .
\end{equation*}
Laplace expansion along the last column gives the desired result.
\end{proof}

\subsection{Proof of Theorem~\ref{thm:2primepower}}

First, let us consider $n = p^k$, where $k$ is a positive integer and $p$ is a prime number.
It follows from~\eqref{equ:vonSterneck} that $c_p(t) = p - 1$ if $p$ divides $t$, while $c_p(t) = -1$ otherwise.
Hence, using Lemma~\ref{lem:abdiagonal}, we have
\begin{equation*}
\det(G_p - x\Id_{p-1}) = {\tiny\begin{pmatrix}
p - 1 - x & -1 & \cdots & -1 \\
-1 & p - 1 - x & \cdots & -1 \\
\vdots & \vdots & \ddots & \vdots \\
-1 & -1 & \cdots & p - 1 - x
\end{pmatrix}} 
= (p - x)^{p - 2}\,(1 - x) ,
\end{equation*}
so that the eigenvalues of $G_p$ are $p$ and $1$, with respective multiplicities $p - 2$ and $1$.

As~a~consequence, \eqref{equ:condmusigma} gives
\begin{equation}\label{equ:CondVp}
\Cond(V_p) = (p - 1)\sqrt{2\left(1 - \frac1{p}\right)} ,
\end{equation}
and, thanks to Theorem~\ref{thm:tosquarefree}, we obtain
\begin{equation*}
\Cond(V_{p^k}) = p^{k-1}\Cond(V_p) = p^{k-1}(p - 1)\sqrt{2\left(1 - \frac1{p}\right)} = \varphi(n)\sqrt{2\left(1 - \frac1{p}\right)} ,
\end{equation*}
as claimed.

Now assume that $n = 2^k p^\ell$, where $k, \ell$ are positive integers and $p$ is an odd prime number.
From Lemma~\ref{lem:G2n} and~\eqref{equ:condmusigma} it follows at once that $\Cond(V_{2p}) = \Cond(V_p)$.
Hence, Theorem~\ref{thm:tosquarefree} and~\eqref{equ:CondVp} yield
\begin{equation*}
\Cond(V_{2^k p^\ell}) = 2^{k-1}p^{\ell-1}\Cond(V_{2p}) = 2^{k-1} p^{\ell-1}(p - 1)\sqrt{2\left(1 - \frac1{p}\right)} = \varphi(n)\sqrt{2\left(1 - \frac1{p}\right)} ,
\end{equation*}
as claimed.~$\square$\\

The next lemma is the well known orthogonality relation between the roots of unity.

\begin{lem}\label{lem:ortho}
We have
\begin{equation*}
\sum_{\ell = 1}^n \big(\zeta_k \overline{\zeta_h}\big)^\ell
= \begin{cases}
n & \text{ if } k = h , \\
0 & \text{ if } k \neq h , \\
\end{cases}
\end{equation*}
for $k,h=1,\dots,m$.
\end{lem}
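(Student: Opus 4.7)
The plan is to split the sum into the two cases on whether $k = h$ or $k \neq h$ and reduce both to elementary facts about $n$th roots of unity.

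First I would handle the diagonal case $k = h$. Since $\zeta_k$ lies on the unit circle, $\zeta_k \overline{\zeta_k} = |\zeta_k|^2 = 1$, so each term of the sum equals $1$ and the sum is $n$.

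For the off-diagonal case $k \neq h$, set $\omega := \zeta_k \overline{\zeta_h}$. Since $\zeta_h$ is a primitive $n$th root of unity, so is $\overline{\zeta_h} = \zeta_h^{-1}$, and therefore $\omega$ is an $n$th root of unity (i.e.\ $\omega^n = 1$). Moreover, $\omega = 1$ would force $\zeta_k = \zeta_h$, contradicting $k \neq h$; hence $\omega \neq 1$. The geometric series formula then gives
\begin{equation*}
\sum_{\ell = 1}^n \omega^\ell = \omega\, \frac{\omega^n - 1}{\omega - 1} = 0 ,
\end{equation*}
which is the desired identity.

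There is no real obstacle here: the only subtlety worth a sentence is justifying that $\omega \neq 1$ when $k \neq h$, which follows because $\overline{\zeta_h} = \zeta_h^{-1}$ makes $\omega = 1$ equivalent to $\zeta_k = \zeta_h$. Everything else is a direct application of the geometric sum identity.
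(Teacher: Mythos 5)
Your proof is correct and is the standard argument; the paper itself gives no proof of this lemma, simply calling it ``the well known orthogonality relation between the roots of unity.'' Your handling of the two cases, including the observation that $\omega = \zeta_k\overline{\zeta_h} = \zeta_k\zeta_h^{-1}$ is an $n$th root of unity different from $1$ when $k \neq h$, is exactly the justification one would supply.
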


\subsection{Proof of Theorem~\ref{thm:nGn1}}

Let $V_n^{-1} = (w_{i,j})_{1 \leq i, j \leq m}$ and define
\begin{equation*}
S_{i,\ell} := \sum_{k = 1}^m w_{i,k} \zeta_k^{\ell} ,
\end{equation*}
for all integers $i,\ell$ with $1 \leq i \leq m$ and $\ell \geq 0$.
On the one hand, since $V_n^{-1} V_n = \Id_m$, for $\ell < m$ we have that $S_{i,\ell} = \delta_{i,\ell+1}$ (Kronecker delta).
On the other hand, since $\zeta_1, \dots, \zeta_k$ are conjugate algebraic integers with minimal polynomial of degree $m$, for $\ell \ge m$ there exist integers $b_0,\dots,b_{m-1}$ such that $\zeta_k^\ell = b_0 + b_1 \zeta_k + \cdots + b_{m-1} \zeta_k^{m-1}$ for $k=1,\dots,m$, and consequently $S_{i,\ell} = b_0 S_{i,0} + b_1 S_{i,1} + \cdots + b_{m-1} S_{i,m - 1}$.
Hence, $S_{i,\ell}$ is always an integer.

Recalling that $G_n = V_n^*\,V_n$, we have $G_n^{-1} = V_n^{-1} \big(V_n^{-1}\big)^*$.
Hence, also using Lemma~\ref{lem:ortho}, the $(i,j)$ entry of $nG_n^{-1}$ is equal to
\begin{equation*}
n \sum_{k = 1}^m w_{i,k} \overline{w_{j,k}} = \sum_{k = 1}^m \sum_{h = 1}^m w_{i,k} \overline{w_{j,h}} \sum_{\ell = 1}^n \big(\zeta_k \overline{\zeta_h}\big)^\ell = \sum_{\ell = 1}^n \left(\sum_{k = 1}^m w_{i,k} \zeta_k^\ell\right)\overline{\left(\sum_{h = 1}^m w_{j,h} \zeta_h^\ell\right)} = \sum_{\ell = 1}^n S_{i,\ell} S_{j,\ell} ,
\end{equation*}
which is an integer.~$\square$

\bibliographystyle{amsplain}

\end{document}